\newcommand{\cMqui}{\operatorname{\scM_{\mathrm{qui}}}}
\newcommand{\Mquibar}{\operatorname{\Mbar_{\mathrm{qui}}}}
\newcommand{\Mquadbar}{\operatorname{\Mbar_{\mathrm{quad}}}}
\newcommand{\cMpot}{\operatorname{\scM_{\mathrm{pot}}}}
\newcommand{\Mpotbar}{\operatorname{\Mbar_{\mathrm{pot}}}}
\newcommand{\Trace}{\operatorname{Tr}}
\newcommand{\bk}{\bsk}
\newcommand{\op}{\mathrm{op}}
\newcommand{\cycl}{\mathsf{cycl}}
\newcommand{\vir}{\mathrm{vir}}
\title{Noncommutative quadric surfaces and
noncommutative conifolds}
\author{Shinnosuke Okawa, Kazushi Ueda}
\date{}
\dedicatory{To the memory of Kentaro Nagao}
\begin{document}

\maketitle

\begin{abstract}
We introduce a compact moduli of noncommutative quadrics,
and show that it is the weighted projective space
of weight (2,4,4,6).
We also introduce a compact moduli of potentials
for the conifold quiver,
and show that it is the weighted projective space
of weight (1,2,3,4).
There is a natural morphism
from the latter to the former,
which is finite of degree 4.
\end{abstract}


\section{Introduction}

The \emph{conifold} is an ordinary double point in dimension 3.
It is a cone over $\bP^1 \times \bP^1$ from the complex point of view,
and a cone over $S^2 \times S^3$ from the real point of view.
It is the most basic singularity in dimension 3,
which appears in various areas of geometry and physics,
such as Reid's fantasy \cite{MR848512,MR909231},
conifold transition \cite{Batyrev-Ciocan-Fontanine-Kim-van_Straten_CT},
Sasaki-Einstein geometry \cite{MR574272},
knot theory \cite{MR1765411}, and
AdS/CFT correspondence \cite{MR1666725}.

As a cone over $\bP^1 \times \bP^1$,
the conifold $X$ is obtained by contracting the zero-section
of the total space
$
 \Ytilde = \cSpec _{\bP^1 \times \bP^1}\lb
   \bigoplus_{n=0}^\infty \cO_{\bP^1 \times \bP^1}(1,1)^{\otimes n}
    \rb
$
of a line bundle on $\bP^1 \times \bP^1$.
The contractions along two rulings of $\bP^1 \times \bP^1$
give two crepant resolutions of the conifold;
\begin{align*}
\begin{psmatrix}[colsep=9mm,rowsep=7mm]
 & \Ytilde \\
 Y & & Y^\dagger \\
 & X
\end{psmatrix}.
\psset{nodesep=5pt,arrows=->}
\ncline{1,2}{2,1}
\ncline{1,2}{2,3}
\ncline{2,1}{3,2}
\ncline{2,3}{3,2}
\end{align*}
Both $Y$ and $Y^\dagger$ are isomorphic to
$
 \cSpec_{\bP^1} \lb \cSym^* \lb \cO_{\bP^1}(1) \oplus \cO_{\bP^1}(1) \rb \rb.
$
The birational morphism
$
 Y \dashrightarrow Y^\dagger
$
obtained from the above diagram,
known as the \emph{Atiyah flop}
\cite{MR0095974},
is responsible for the non-separatedness of the moduli space
of K3 surfaces.

Besides two crepant resolutions,
the conifold has a \emph{noncommutative crepant resolution}
\cite{Van_den_Bergh_TFNR,Van_den_Bergh_NCR}.
It is an algebra $A_0$ of the form $\End_R M$,
where $R_0=\bC[x,y,z,w]/(xy-zw)$ is the coordinate ring of $X$, and
$M=R_0 \oplus I$, $I=(x,z) \subset R_0$, is a reflexive $R_0$-module.
All these crepant resolutions are derived-equivalent
\cite{Bondal-Orlov_semiorthogonal,Bridgeland_FDC,Van_den_Bergh_TFNR};
\begin{align}
 D(\Qcoh Y) \cong D(\Module A_0) \cong D(\Qcoh Y^\dagger).
\end{align}

\begin{figure}[t]
\begin{minipage}{.45 \linewidth}
\centering
\vspace{10mm}
$
\begin{psmatrix}[mnode=circle]
 v_0 & & v_1
\psset{nodesep=3pt,arrows=->}
\ncarc[arcangle=17]{1,1}{1,3}
\ncarc[arcangle=30]{1,1}{1,3}^{a_1, a_2}
\ncarc[arcangle=17]{1,3}{1,1}
\ncarc[arcangle=30]{1,3}{1,1}_{b_1, b_2}
\end{psmatrix} 
$
\vspace{13mm}\\
\caption{The conifold quiver}
\label{fg:conifold_quiver}
\end{minipage}
\begin{minipage}{.45 \linewidth}
\centering
\vspace{8mm}
$
\begin{psmatrix}[mnode=circle]
v_{1,1} & & v_{0,1} \\
v_{0,0} & & v_{1,0}
\psset{nodesep=3pt,arrows=->,shortput=tablr}
\ncline[offset=2pt]{2,1}{2,3}^{a_1}
\ncline[offset=-2pt]{2,1}{2,3}_{a_2}
\ncline[offset=2pt]{2,3}{1,3}<{b_1}
\ncline[offset=-2pt]{2,3}{1,3}>{b_2}
\ncline[offset=-2pt]{1,3}{1,1}^{a_1'}
\ncline[offset=2pt]{1,3}{1,1}_{a_2'}
\ncline[offset=-2pt]{1,1}{2,1}<{b_1'}
\ncline[offset=2pt]{1,1}{2,1}>{b_2'}
\end{psmatrix} 
$
\vspace{0mm}
\caption{The double cover}
\label{fg:double_quiver}
\end{minipage}
\begin{minipage}{.45 \linewidth}
\centering
\vspace{13mm}
$
\begin{psmatrix}[mnode=circle,colsep=20mm]
 v_{0,0} & v_{1,0} & v_{0,1} & v_{1,1}
\psset{nodesep=3pt,arrows=->}
\ncline[offset=2pt]{1,1}{1,2}^{a_1}
\ncline[offset=-2pt]{1,1}{1,2}_{a_2}
\ncline[offset=2pt]{1,2}{1,3}^{b_1}
\ncline[offset=-2pt]{1,2}{1,3}_{b_2}
\ncline[offset=2pt]{1,3}{1,4}^{a_1'}
\ncline[offset=-2pt]{1,3}{1,4}_{a_2'}
\end{psmatrix} 
$
\vspace{3mm}\\
\caption{The $\bP^1 \times \bP^1$ quiver}
\label{fg:P1P1_quiver}
\end{minipage}
\end{figure}

The algebra $A_0$ can be descibed as the Jacobi algebra
$\bC Q/(\partial \Phi_0)$
of the quiver in \pref{fg:conifold_quiver}
with the potential
\begin{align} \label{eq:conifold_potential}
 \Phi_0 = a_1 b_1 a_2 b_2 - a_1 b_2 a_2 b_1.
\end{align}
Szendr\H{o}i \cite{Szendroi_NCDT} introduced the notion
of \emph{noncommutative Donaldson invariant},
and conjectured that its generating function
\begin{align} \label{eq:DT}
 Z(q) = \sum_{v \in \bZ^2} D(v) q^v, \qquad
 D(v) = \sum_{n \in \bZ} n \chi(\nu^{-1}(n)),
\end{align}
has an infinite product expansion
\begin{align}
 Z(q) = Z_{\mathrm{DT}}(Y; q_0 q_1, q_1^{-1})
  \prod_{m=1}^\infty \lb 1+q_0^m(-q_1)^{m+1} \rb^m.
\end{align}
Here $\nu : \cM(v) \to \bZ$ is the Behrend function
\cite{MR2600874}
on the moduli space $\cM(v)$ of cyclic $\Atilde$-modules
of dimension vector $(v_0, v_1, 1)$, and
 $Z_\mathrm{DT}(Y; q_0 q_1, q_1^{-1})$
is the \emph{Donaldson-Thomas invariant}
\cite{MR1634503,MR1818182,
Maulik-Nekrasov-Okounkov-Pandharipande_I,
Maulik-Nekrasov-Okounkov-Pandharipande_I}
of the resolved conifold $Y$.
Szendr\H{o}i's conjecture is proved by Young
\cite{Young_CPP} in a combinatorial way,
and by Nagao and Nakajima \cite{MR2836398}
in an algebro-geometric way.
This result is generalized to small toric Calabi-Yau 3-folds
in \cite{MR2999994, MR2643058},
and to motivic invariants in \cite{MR2927365,Morrison-Nagao}.

In this paper,
we consider deformations of the algebra $A_0$
as a \emph{graded Calabi-Yau algebra of dimension 3}.
Since any graded Calabi-Yau algebra of dimension 3
comes from a quiver with potential
\cite[Theorem 3.1]{Bocklandt_GCYA},
we study deformation of potentials
instead of deformation of algebras.
The potential of the conifold quiver
in \pref{fg:conifold_quiver}
is an element of
$
 \Sym^2 (V_0 \otimes V_1),
$
where
$
 V_0
$
and
$
 V_1
$
are two-dimensional vector spaces,
and two potentials give isomorphic graded algebras
if they are related by the action of $\GL(V_0) \times \GL(V_1)$.
This motivates us to define the moduli stack of potentials as
\begin{align}
 \cMpot = [\Sym^2(V_0 \otimes V_1) / \GL(V_0) \times \GL(V_1)].
\end{align}
The corresponding compact moduli scheme,
obtained as the GIT quotient,
will be denoted by
\begin{align}
 \Mpotbar = \Proj \lb
  \bC \ld \Sym^2(V_0 \otimes V_1) \rd^{\SL(V_0) \times \SL(V_1)}
    \rb.
\end{align}

The double cover of the conifold quiver
shown in \pref{fg:double_quiver}
inherits the potential
from the conifold quiver.
When the potential for the conifold quiver
is the classical one in \eqref{eq:conifold_potential},
then the derived category of modules
over the Jacobi algebra of the double cover
is equivalent to the derived category of coherent sheaves
on the total space of the canonical bundle of $\bP^1 \times \bP^1$.
By taking the quotient with respect to the ideal $\la b_1', b_2' \ra$
generated by arrows $b_1'$ and $b_2'$,
one obtains the quiver in \pref{fg:P1P1_quiver},
equipped with two relations among eight paths
from the source vertex $v_{0,0}$ to the sink vertex $v_{1,1}$.
This quiver with relations is derived-equivalent to $\bP^1 \times \bP^1$.
This suggests that
deformations of the relations
of the quiver \pref{fg:P1P1_quiver}
describe noncommutative $\bP^1 \times \bP^1$.
This indeed is the case,
and the resulting noncommutative deformation of $\bP^1 \times \bP^1$
is a noncommutative quadric surface
in the sense of Van den Bergh
\cite{MR2836401}.

According to Bondal and Polishchuk \cite{MR1230966}
and Van den Bergh \cite{MR2836401},
noncommutative quadric surfaces are classified
either by
\begin{itemize}
 \item
a quintuple $(V_0, V_1, V_2, V_3, W)$
of 2-dimensional vector spaces $V_0$, $V_1$, $V_2$, $V_3$
and a one-dimensional subspace
$W \subset V_0 \otimes V_1 \otimes V_2 \otimes V_3$, or
 \item
a quadruple $(E, \scrL_0, \scrL_1, \scrL_2)$
of a projective curve $E$ of arithmetic genus one and
three line bundles $\scrL_0$, $\scrL_1$, $\scrL_2$ on $E$
satisfying certain conditions.
\end{itemize}
The moduli stack of quintuples is given by
\begin{align}
 \cMqui = [V_0 \otimes V_1 \otimes V_2 \otimes V_3 \otimes W^{\vee}
  / \GL(V_0) \times \GL(V_1) \times \GL(V_2) \times \GL(V_3) \times \GL(W)],
\end{align}
and the corresponding GIT qutient is given by
\begin{align} \label{eq:Mquibar}
 \Mquibar = \Proj \lb
   \bC \ld
    V_0 \otimes V_1 \otimes V_2 \otimes V_3
   \rd^{\SL(V_0) \times \SL(V_1) \times \SL(V_2) \times \SL(V_3)}
  \rb.
\end{align}
As for quadruples,
we give a birational parametrization
in terms of a quotient of the fiber product
of two copies of certain elliptic surface
over the modular curve;
\begin{align}
 \Mquadbar = \left. S(2) \times_{X(2)} S(2)
  \right/ \lb \SL_2(\bF_2) \ltimes (\bZ/2\bZ)^4 \rb.
\end{align}
The main result in this paper is the following:

\begin{theorem} \label{th:main}
\ \\[-5mm]
\begin{enumerate}
 \item \label{it:quintuple}
The moduli stack $\cMqui$,
after removing the generic stabilizer group $\bG_m^4$,
is birational to the GIT quotient $\Mquibar$.
This in turn is isomorphic
to the weighted projective space
$\bP(2,4,4,6)$.
 \item \label{it:quadruple}
The moduli spaces $\Mquibar$ and $\Mquadbar$ are birational.
 \item \label{it:potential}
The moduli stack $\cMpot$,
after removing the generic stabilizer group $\bG_m$,
is birational to the GIT quotient $\Mpotbar$.
This in turn is isomorphic
to the weighted projective space
$\bP(1,2,3,4)$.
 \item \label{it:finite}
There is a natural morphism $\cMpot \to \cMqui$,
which induces a finite morphism
$\Mpotbar \to \Mquibar$
of degree 4.
\end{enumerate}
\end{theorem}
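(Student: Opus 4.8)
The plan is to split Theorem~\ref{th:main} into the invariant-theoretic core (parts \eqref{it:quintuple}, \eqref{it:potential}), the comparison of the two classifications (part \eqref{it:quadruple}), and the doubling map (part \eqref{it:finite}), feeding the geometric input of Bondal--Polishchuk and Van den Bergh into the algebra. For part \eqref{it:quintuple} I would identify
$
 \bC\ld V_0 \otimes V_1 \otimes V_2 \otimes V_3 \rd^{\SL(V_0)\times\cdots\times\SL(V_3)}
$
with the classical ring of invariants of $\SL_2^{\times 4}$ acting on $(\bC^2)^{\otimes 4}$, i.e.\ the ring of \emph{four-qubit invariants}, which is a free polynomial algebra on four generators $I_2,I_4,I_4',I_6$ of degrees $2,4,4,6$ (Luque--Thibon, or a direct transvectant computation); since the quotient has dimension $16-12=4$, these generators are algebraically independent and $\Mquibar=\Proj$ is $\bP(2,4,4,6)$. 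The birationality of $\cMqui$ (after removing the generic stabilizer $\bG_m^4$) with $\Mquibar$ follows from the standard comparison between a stack quotient on its stable locus and its GIT quotient: the central tori of $\prod\GL$ act only through overall scaling, whose residual $\bG_m$ is exactly the grading defining $\Proj$, and generic orbits are closed.

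For part \eqref{it:potential} I would decompose, as $\SL(V_0)\times\SL(V_1)$-representations,
\[
 \Sym^2(V_0\otimes V_1)\cong(\Sym^2 V_0\otimes\Sym^2 V_1)\oplus(\wedge^2 V_0\otimes\wedge^2 V_1).
\]
The second summand is the trivial line and contributes a free degree-$1$ invariant $c$; on the first summand, $\cong\bC^3\otimes\bC^3$, each factor acts through $\operatorname{SO}_3$, and the invariants of the matrix $M$ are generated by the characteristic-polynomial coefficients $e_1=\operatorname{tr}(MM^\top)$, $e_2$, $e_3$ of $MM^\top$ (degrees $2,4,6$) together with $\det M$ (degree $3$), with the single relation $e_3=(\det M)^2$ eliminating the would-be sixth-degree generator. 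As the quotient has dimension $9-6=3$, the ring is the polynomial algebra $\bC[c,e_1,\det M,e_2]$ of weights $1,2,3,4$, so $\Mpotbar=\bP(1,2,3,4)$; the stack/GIT birationality is as above, now with generic stabilizer $\bG_m$. Part \eqref{it:quadruple} I would deduce from the two classifications: from a generic quintuple the Artin--Tate--Van den Bergh point-scheme construction produces an arithmetic-genus-one curve $E$ with three line bundles $\scrL_0,\scrL_1,\scrL_2$, giving a birational map $\Mquibar\dashrightarrow\Mquadbar$ whose inverse reconstructs the multilinear form; matching this with the modular picture, $E$ with its level-$2$ data is a point of $X(2)$ and the two line-bundle moduli are the fibre coordinates of the two copies of $S(2)$, while the relabelling symmetries (changes of level structure, translations/signs of the $\scrL_i$) account precisely for $\SL_2(\bF_2)\ltimes(\bZ/2\bZ)^4$.

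Part \eqref{it:finite} is the crux. The doubling of the quiver gives, with $V_2=V_0$ and $V_3=V_1$, an $\SL(V_0)\times\SL(V_1)$-equivariant inclusion $\Sym^2(V_0\otimes V_1)\hookrightarrow V_0\otimes V_1\otimes V_2\otimes V_3$ for the diagonal embedding of groups; restriction of invariants gives a graded homomorphism $\bC[I_2,I_4,I_4',I_6]\to\bC[c,e_1,\det M,e_2]$, hence $\cMpot\to\cMqui$ and $\Mpotbar\to\Mquibar$. Checking that the four restricted invariants have no common zero off the irrelevant ideal makes this a genuine morphism, and properness of the weighted projective spaces upgrades quasi-finiteness to finiteness. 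To compute the degree I would read off, by weight, the shapes of the restrictions on the chart $c=1$: the degree-$2$ invariant $I_2|$ is affine-linear in $e_1$ alone; $I_4|,I_4'|$ are affine in $(\det M,e_2)$; and $I_6|$ is affine in $e_2$ but contains the term $(\det M)^2$. Solving the two linear equations $I_4|=s\,I_2|^2$, $I_4'|=t\,I_2|^2$ expresses $\det M$ and $e_2$ as degree-$\le 2$ polynomials in $e_1$, and substituting into $I_6|=u\,I_2|^3$ yields, because of the $(\det M)^2$ term, a degree-$4$ equation in $e_1$; thus the generic fibre has four points and the morphism has degree $4$.

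The main obstacle is exactly this degree count in part \eqref{it:finite}: the argument fixes the degree as $4$ only after the explicit restriction confirms the nondegeneracy assumptions, namely that the constant $(\det M,e_2)$-coefficient matrix of $(I_4|,I_4'|)$ is invertible and that the coefficient of $(\det M)^2$ in $I_6|$ is nonzero, so that the eliminated quartic neither degenerates nor drops degree. A secondary difficulty is the symmetry bookkeeping in part \eqref{it:quadruple}, identifying the residual group with $\SL_2(\bF_2)\ltimes(\bZ/2\bZ)^4$ rather than merely up to commensurability.
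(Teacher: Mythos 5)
Your outline reaches the correct conclusions in all four parts, but the route differs from the paper's in two substantive ways, and the difference matters most exactly where you locate the crux. The paper does not treat the two invariant rings separately: it runs both parts (\ref{it:quintuple}) and (\ref{it:potential}) through the accidental isomorphism $\SL_2(\bC)\times\SL_2(\bC)\to\SO_4(\bC)$, identifying $V_0\otimes V_1\otimes V_2\otimes V_3$ with $4\times 4$ matrices under $(g,h)\cdot X=gXh^T$ and $\Sym^2(V_0\otimes V_1)$ with symmetric $4\times 4$ matrices under $\SO_4$-conjugation; diagonalization (singular values, resp.\ eigenvalues) reduces both rings to invariants of the diagonal under residual Weyl groups, giving $\bC[x_1,\dots,x_4]^{\frakS_4\ltimes(\bZ/2\bZ)^3}=\bC[f_2,f_4,g_4,f_6]$ and $\bC[x_1,\dots,x_4]^{\frakS_4}=\bC[f_1,f_2,f_3,f_4]$. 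Your citation of the four-qubit ring for (\ref{it:quintuple}) is precisely the reference the paper leans on, and your decomposition of $\Sym^2(V_0\otimes V_1)$ into $\lb\Sym^2V_0\otimes\Sym^2V_1\rb\oplus\lb\wedge^2V_0\otimes\wedge^2V_1\rb$ with $\SO_3\times\SO_3$ acting on $3\times 3$ matrices is a correct alternative (it is the traceless-part picture of the same computation). The payoff of the paper's uniform setup is part (\ref{it:finite}): since both GIT quotients are presented as quotients of the \emph{same} $\bP^3_{[x_1:\cdots:x_4]}$ by nested finite groups, the map is the projection $\bP^3/\frakS_4\to\bP^3/(\frakS_4\ltimes(\bZ/2\bZ)^3)$, hence automatically a finite morphism, and the degree is the order of $(\bZ/2\bZ)^3$ modulo the global sign $-I$ (which acts trivially on $\bP^3$), namely $4$ --- no elimination needed. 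Your chart-by-chart elimination would also yield $4$, but as you yourself note it is contingent on unverified nondegeneracy claims (invertibility of the linear system in $(\det M,e_2)$, nonvanishing of the $(\det M)^2$-coefficient, and that the resulting quartic in $e_1$ has four distinct genuine solutions); this is a real gap in your write-up that the common-diagonalization picture closes for free, and it also produces the explicit expressions of $g_4$ and $f_6$ in $\bC[f_1,f_2,f_3,f_4]$ recorded in the paper. For part (\ref{it:quadruple}) your sketch agrees with the paper, which likewise reduces the claim to the quintuple--quadruple bijection together with a proposition matching isomorphism of the quadruples $(E_\tau,\scrO(2o),\scrO(2z_1),\scrO(2z_2))$ with the $\SL_2(\bF_2)\ltimes(\bZ/2\bZ)^4$-action on $S'(2)\times_{X'(2)}S'(2)$; the symmetry bookkeeping you worry about is exactly the content of that proposition.
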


The moduli space $\cMqui$
also appears in \cite{1402.3768}
as the moduli space of 4-qubit states.
\pref{th:main} (\pref{it:quintuple}) is
a consequence of known results
\cite{MR2039690,MR2409702},
and has little claim in originality.

This paper is organized as follows:
In \pref{sc:ncquadric},
we summarize results from
\cite{MR2836401} on noncommutative quadric surfaces.
\pref{th:main} (\pref{it:quintuple}) is proved
in \pref{sc:quintuple}, and
\pref{th:main} (\pref{it:quadruple}) is proved
in \pref{sc:quadruple}.
The proofs of Theorems \ref{th:main} (\pref{it:potential})
and \ref{th:main} (\pref{it:finite}) are given
in \pref{sc:potential}.
In \pref{sc:conifold},
we introduce the notion of noncommutative conifolds,
and discuss their relation with potentials.
In \pref{sc:DT},
we show that the motivic Donaldson-Thomas invariants
depend on the potential.
%
%

\emph{Acknowledgment}.
S.~O. is supported by JSPS Grant-in-Aid for Young Scientists No.~25800017.
K.~U. is supported by JSPS Grant-in-Aid for Young Scientists No.~24740043.
A part of this work is done
while K.~U. is visiting Korea Institute for Advanced Study,
whose hospitality and nice working environment
is gratefully acknowledged.

\section{Noncommutative quadric surfaces}
 \label{sc:ncquadric}

See \cite[Section 2]{MR2836401} for more details
of the contents of this section.
A \emph{$\bZ$-algebra} is an algebra with a direct sum decomposition
\begin{align}
 A = \bigoplus_{i,j \in \bZ} A_{ij}
\end{align}
such that the product satisfies
\begin{gather}
 A_{ij} A_{kl}= 0 \text{ for } j \ne k, \text{ and } \\
 A_{ij} A_{jk} \subset A_{ik}. \phantom{\text{ and }}
\end{gather}
We assume that $A_{ii}$ for any $i \in \bZ$
has an element $e_i$, called the \emph{local unit},
which satisfies
\begin{align}
 f e_i &= f \text{ for any } f \in A_{ki}, \text{ and} \\
 e_i g &= g \text{ for any } g \in A_{ij}.
\end{align}
A $\bZ$-algebra $A$ is \emph{positively graded} if
\begin{align}
 A_{ij} = 0 \text{ for } i > j.
\end{align}
A positively graded $\bZ$-algebra $A$ is \emph{connected} if
\begin{align}
 \dim A_{ij} < \infty \text{ and }
 A_{ii} = \bC \, e_i \text{ for any } i, j \in \bZ.
\end{align}
A \emph{graded $A$-module}
is a right $A$-module of the form
$M = \bigoplus_{i \in \bZ} M_i$
such that
\begin{gather}
 M_i A_{kl}= 0 \text{ for } i \ne k, \text{ and } \\
 M_i A_{ij} \subset M_j. \phantom{\text{ and }}
\end{gather}
A connected $\bZ$-algebra $A$ is \emph{AS-regular} if
\begin{align}
 &\dim A_{ij} \text{ is bounded by a polynomial in } j-i,\\
 &\text{ the projective dimension of} \  S_i
 \text{ is bounded by a constant independent of } i,
  \text{ and}\\
 &\textstyle{\sum_{j,k}} \dim \Ext_{\Gr A}^j(S_k, P_i) = 1
 \text{ for any } i \in \bZ,
\end{align}
where $P_i = e_i A$ is a projective $A$-module,
and $S_i = \bC e_i$ is a simple $A$-module.
An AS-regular $\bZ$-algebra $A$ is
a \emph{3-dimensional cubic AS-regular $\bZ$-algebra}
if the minimal resolution of $S_i$ is of the form
\begin{align}
 0 \to P_{i+4} \to P_{i+3}^2 \to P_{i+1}^2 \to P_i \to S_i \to 0.
\end{align}
A graded $A$-module $M$ is \emph{positively bounded}
if $M_i = 0$ for sufficiently large $i$.
A graded $A$-module is a \emph{torsion module}
if it can be described as the colimit
of a sequence of positively bounded $A$-modules.
The quotient abelian category
of the abelian category $\Gr A$
of graded $A$-modules
by the Serre subcategory $\Tor A$
consisting of torsion modules
will be denoted by
\begin{align}
 \Qgr A = \Gr A / \Tor A.
\end{align}
A \emph{noncommutative quadric surface} is
an abelian category of the form $\Qgr A$
for a 3-dimensional cubic AS-regular $\bZ$-algebra
\cite[Definition 3.2]{MR2836401}.

A 3-dimensional cubic AS-regular $\bZ$-algebra is \emph{linear}
if it is of the form
\begin{align}
 A &= \bigoplus_{i,j \in \bZ} \Hom
  \lb \cO_{\bP^1\times\bP^1}(-j),
    \cO_{\bP^1\times\bP^1}(-i) \rb
\end{align}
where
\begin{align}
 \cO_{\bP^1\times\bP^1}(n)
  &=
\begin{cases}
 \cO_{\bP^1}(k) \boxtimes \cO_{\bP^1}(k) & n = 2k, \\
 \cO_{\bP^1}(k) \boxtimes \cO_{\bP^1}(k+1) & n = 2k+1.
\end{cases}
\end{align}
All other 3-dimensional cubic AS-regular $\bZ$-algebra
will be called \emph{elliptic}.

A \emph{quintuple} $(V_0,V_1,V_2,V_3,W)$ consists of
\begin{align}
&\text{two-dimensional vector spaces
$V_0$, $V_1$, $V_2$, $V_3$, and}\\
&\text{a one-dimensional subspace $W$ of }
V_0 \otimes V_1 \otimes V_2 \otimes V_3.
\end{align}
A quintuple is \emph{geometric} if
a basis $w$ of $W$ satisfies
\begin{align}
 \la \phi_j \otimes \phi_{j+1}, \, w \ra \ne 0
  \in V_{j+2} \otimes V_{j+3}
\end{align}
for any $j = 0,1,2,3$ and any non-zero
$\phi_j\in V_{j}^{\vee}$, $\phi_{j+1}\in V_{j+1}^{\vee}$,
where indices are taken modulo 4
\cite[Definition 4.7]{MR2836401}.
A geometric quintuple is \emph{linear}
if it can be written as
\begin{align}
 w = x_0 x_1 y_2 y_3 - y_0 x_1 x_2 y_3
  - x_0 y_1 y_2 x_3 + y_0 y_1 x_2 x_3
\end{align}
for a suitable choice of a basis $\{ x_i, y_i \}$
of $V_i$ for $i=0,1,2,3$ (see \cite[Lemma 4.6]{MR2836401}).
All other geometric quintuples are called
\emph{elliptic}.
Two quintuples
$(V_0,V_1,V_2,V_3,W)$ and $(V_0',V_1',V_2',V_3',W')$
are isomorphic
if there are linear isomorphisms $V_i \cong V_i'$
for $i=0,1,2,3$
sending
$
 W \subset V_0 \otimes V_1 \otimes V_2 \otimes V_3
$
to
$
 W' \subset V_0' \otimes V_1' \otimes V_2' \otimes V_3'.
$

A globally generated line bundle $\scrL$ on a scheme $C$
defines a morphism
$\phi_\scrL : C \to \bP(H^0(\scrL)^{\vee})$,
whose direct product will be denoted by
\begin{align}
 \phi_{\scrL,\scrL'} \colon
  C \to \bP(H^0(\scrL)^{\vee}) \times \bP(H^0(\scrL')^{\vee}).
\end{align}
An \emph{admissible quadruple}
$
 (C, \scrL_0, \scrL_1, \scrL_2)
$
consists of a curve $C$ and line bundles $\scrL_0$,
$\scrL_1$, $\scrL_2$ on $C$ of degree 2 such that
\begin{align}
&h^0(\scrL_i) = 2 \text{ for } i=0,1,2, \\
&C \text{ is embedded
as a divisor of bidegree $(2,2)$ in $\bP^1\times \bP^1$
by both $\phi_{\scrL_0, \scrL_1}$ and $\phi_{\scrL_1, \scrL_2}$,} \\
&\deg (\scrL_0|_E)=\deg(\scrL_2|_E)
\text{ for every irreducible component $E$ of $C$, and} \\
&\scrL_0\not\cong \scrL_2.
\end{align}
Two admissible quadruples
$
 (C, \scrL_0, \scrL_1, \scrL_2)
$
and
$
 (C', \scrL_0', \scrL_1', \scrL_2')
$
are \emph{isomorphic}
if there is an isomorphism
$\varphi : C \to C'$ of curves
such that $\varphi^*\scrL_i' \cong \scrL_i$
for $i=0,1,2$.

There is a bijective correspondence
among isomorphisms classes of
\begin{align}
 &\text{elliptic 3-dimensional
cubic AS-regular $\bZ$-algebras},
 \label{it:elliptic_AS-regular} \\
 &\text{elliptic quintuples, and}
 \label{it:elliptic_quintuple} \\
 &\text{admissible quadruples}, \hspace{110mm}
 \label{it:admissible_quadruple}
\end{align}
given as follows:
\begin{itemize}
 \item
\cite[\S 4.2]{MR2836401} :
For a 3-dimensional cubic AS-regular $\bZ$-algebra
$
 A = \bigoplus_{i\le j} A_{ij},
$
set $V_i=A_{i, i+1}$ for $i=0,1,2,3$ and 
$
 R_i=\ker (V_i\otimes V_{i+1}\otimes V_{i+2}\to A_{i,i+3})
$
for $i=0,1$. Then
$
 W_0=R_0\otimes V_3\cap V_0\otimes R_1
$
is one-dimensional, and
$(V_0, V_1, V_2, V_3, W_0)$ gives a geometric quintuple.
 \item
\cite[\S 4.4]{MR2836401} :
For a geometric quintuple
$
 Q=(V_0, V_1, V_2, V_3, W_0=\bk w),
$
choose bases $\{ x_i,y_i \}$ of $V_i$
and write $w=f x_3+g y_3$.
Let
$
 C \subset
\bP(V_0)\times \bP(V_1)\times \bP(V_2)
$
be the variety defined by the equations $\{f,g\}$, and
set $\scrL_i=p^\ast_i(\scrO(1))$ for $i=0,1,2$,
where $p_i \colon C \to \bP(V_i)$ are the projections.
The geometricity of $Q$ implies that
$\phi_{\scrL_0, \scrL_1} = (p_0,p_1)$ and
$\phi_{\scrL_1, \scrL_2} = (p_1,p_2)$ are closed embeddings,
and one obtains a quadruple
$(C,\scrL_0,\scrL_1,\scrL_2)$.
\item
\cite[\S 4.5]{MR2836401} :
For an admissible quadruple $(C,\scrL_0,\scrL_1,\scrL_2)$,
construct a helix $(\scrL_i)_{i \in \bZ}$ by
\[
\scrL_i\otimes \scrL^{-1}_{i+1}\otimes
  \scrL^{-1}_{i+2}\otimes \scrL_{i+3}=\scrO_C.
  \]
Then the $\bZ$-algebra
generated by $A_{i, i+1}=V_i=H^0(C, \scrL_i)$
with the relations
$R_i=\ker (V_i\otimes V_{i+1}\otimes V_{i+2}\to A_{i,i+3})$
is an elliptic 3-dimensional cubic AS-regular $\bZ$-algebra.
\end{itemize}

\section{Moduli of quintuples}
 \label{sc:quintuple}

The invariant ring of
$\bC[V_0 \otimes V_1 \otimes V_2 \otimes V_3]$
with respect to the action of
$
 \SL(V_0) \times \SL(V_1) \times \SL(V_2) \times \SL(V_3)
$
is given in \cite{MR2039690}.
In this section,
we give a description of this invariant ring
along the lines of \cite{MR1910235,MR2409702}
and prove \pref{th:main}.\ref{it:quintuple}.

Let
\begin{align}
 \omega( x_1 \bse_1 + x_2 \bse_2, y_1 \bse_1 + y_2 \bse_2)
  &= x_1 y_2 - x_2 y_1
\end{align}
be a skew-symmetric bilinear form on $\bC^2$.
The action of $\SL_2(\bC) \times \SL_2(\bC)$
on $\bC^2 \otimes \bC^2$
defined by
\begin{align}
 (g, h) \cdot u \otimes v = (gu) \otimes (h v).
\end{align}
preserves the non-degenerate symmetric bilinear form
\begin{align}
 \lb v \otimes w, \, x \otimes y \rb
  &= \omega(v, x) \omega(w, y),
\end{align}
whose Gram matrix with respect to the basis
$
\{
 \bse_1 \otimes \bse_1,
 \bse_1 \otimes \bse_2,
 \bse_2 \otimes \bse_1,
 \bse_2 \otimes \bse_2
\}
$
is given by
\begin{align}
 J =
\begin{pmatrix}
0 & 0 & 0 & 1\\
0 & 0 & -1 & 0\\
0 & -1 & 0 & 0\\
1 & 0 & 0 & 0\\
\end{pmatrix}.
\end{align}
An orthonormal basis with respect to $J$ is given by
\begin{align}
T:=
\frac{1}{\sqrt{2}}
\begin{pmatrix}
1 & 0 & 0 & 1\\
0 & \sqrt{-1} & \sqrt{-1} & 0\\
0 & -1 & 1 & 0\\
\sqrt{-1} & 0 & 0 & -\sqrt{-1}\\
\end{pmatrix},
\end{align}
so that one obtains a homomorphism
\begin{align} \label{eq:accidental}
\begin{array}{cccc}
 \varphi : & \SL_2(\bC) \times \SL_2(\bC) & \to & \SO_4(\bC) \\
 & \vin & & \vin \\
 & (g_1,g_2) & \mapsto & T^T(g_1\otimes g_2)T
\end{array}
\end{align}
of algebraic groups.
The induced morphism
\begin{align}
 (d \varphi)_e : \fraks \frakl_2(\bC) \oplus \fraks \frakl_2(\bC)
  \to \fraks \frako(\bC)
\end{align}
of Lie algebras is an isomorphism 
called the \emph{accidental isomorphism}.
Hence the morphism $\varphi$ is \'{e}tale surjective,
and one can easily see that
\begin{align}
 \Ker \varphi = \{\pm (I, I)\} \in \SL_2(\bC) \times \SL_2(\bC).
\end{align}
One can identify the ring $\bC[V_0 \otimes V_1 \otimes V_2 \otimes V_3]$
with the ring $\bC[U \otimes U']$
equipped with the action of $\SO(U) \times \SO(U')$
through the homomorphisms
$\SL(V_0) \times \SL(V_1) \to \SO(U)$ and
$\SL(V_2) \times \SL(V_3) \to \SO(U')$,
in such a way that the invariant rings are isomorphic;
\begin{align}
 \bC[V_0 \otimes V_1 \otimes V_2 \otimes V_3]^{
  \SL(V_0) \times \SL(V_1) \times \SL(V_2) \times \SL(V_3)}
   \cong
 \bC[U \otimes U']^{\SO(U) \times \SO(U')}.
\end{align}
The action of $\SO(U) \times \SO(U')$ on $U \otimes U'$
can be identified with the action of $\SO(U) \times \SO(U)$
on $\End U$ given by
\begin{align}
 (g, h) \cdot X = g X h^T.
\end{align}
Singular value decomposition allows us to turn
any $n \times n$ matrix into a diagonal matrix
under this action.
The entries $x_1,\ldots,x_n$
of the resulting diagonal matrix is unique up to
\begin{itemize}
 \item
permutations of $x_1,\ldots,x_n$, and
 \item
sign changes $x_i\mapsto \epsilon_ix_i$,
where $\epsilon_i=\pm1$ and
$\epsilon_1\cdots\epsilon_n=1$.
\end{itemize}
As a consequence,
one has an isomorphism
\begin{align}\label{singular_values}
 \bC[\End U]^{\SO(U)\times \SO(U)}
  &\cong \bC[x_1,x_2,x_3,x_4]^\mathsf{W} \\
  &= \bC[f_2, f_4, g_4, f_6],
\end{align}
where $\mathsf{W}=\frakS_4\ltimes (\bZ/2\bZ)^3$
acts on $\bC^4$ by permutations and sign changes,
and
\begin{align}
 f_{2d} &= \sum_{i=1}^{4}x_i^{2d}, \\
 g_4 &= x_1x_2x_3x_4.
\end{align}
The corresponding elements
of $\bC[\End U]^{\SO(U)\times \SO(U)}$ are
\begin{align}
 f_{2d}(X) &= \Trace{((X^TX)^d)}, \\
 g_4 &= \det(X).
\end{align}
It follows that
\begin{align}
 &\text{an element $X \in \End U$ is stable
if and only if $g_4(X) \ne 0$}, \\
 &\text{an element $X \in \End U$ is unstable
if and only if $X^T X$ is nilpotent, and}\\
 &\text{the GIT quotient is given by }
 \Proj \bC[f_2,f_4,g_4,f_6]
  = \bP(2,4,4,6).
  \hspace{40mm}
\end{align}

\section{Elliptic quadruples}
 \label{sc:quadruple}

The principal congruence subgroup
\begin{align}
 \Gamma(2)=\Ker(\SL_2(\bZ)\to\SL_2(\bF_2))
\end{align}
of the modular group $\SL_2(\bZ)$
acts on the upper half plane $\bH$ by
\begin{align}
\left(\gamma=
\begin{pmatrix}
a & b\\
c & d\\
\end{pmatrix}, \tau\right)
\mapsto
\frac{a\tau+b}{c\tau+d}.
\end{align}
The generic stabilizer of this action is $\la \pm I_2 \ra$,
and the induced action of $\Gamma(2)/\langle\pm I_2\rangle$ is free.
The quotient $X'(2)=\bH/\Gamma(2)$
can be compactified to the modular curve
by adding three cusps.
The modular curve
$
 X(2) \cong \Proj \bC[\lambda_0, \lambda_1]
$
is known as the $\lambda$-line,
which parameterizes the family
\begin{align}
 E_{\lambda}=
  \lc [X:Y:Z] \in \bP(1,1,2) \relmid
   Z^2=XY(X-Y)(\lambda_1X-\lambda_0Y) \rc
\end{align}
of elliptic curves
equipped with level-2 structures.

Consider the action of the group $\Gamma(2)\ltimes \bZ^2$ on $\bH\times \bC$ defined by
\begin{align}
 (\gamma,m,n) \colon (\tau, z) \mapsto
  \lb \frac{a\tau+b}{c\tau+d}, \frac{z+m\tau+n}{c\tau+d} \rb.
\end{align}
The action of the subgroup $\bZ^2$ is free, and
the action of the quotient group $\Gamma(2)$
on $\bH\times\bC/\bZ^2$ is free
outside of the four sections
corresponding to the four 2-division points.
Let $S'(2)$ be the branched double cover of the quotient
$\bH\times\bC/\Gamma(2)\ltimes \bZ^2$ along the images of those four sections.
The natural projection
\begin{align}
 S'(2) \to X'(2), \quad (\tau, z) \mapsto \tau
\end{align}
is an elliptic fibration,
which can be identified with the family
\begin{align}
V(Z^2-XY(X-Y)(\lambda_1X-\lambda_0Y))\subset \bP(1,1,2)_{X:Y:Z}\times\bA^1_{\lambda}\to
\bA^1_{\lambda}.
\end{align}
The origin of the elliptic curve is given by $[1:0:0]$,
and other 2-torsion points are given by
$[1:1:0]$, $[\lambda_0:\lambda_1:0]$, and $[0:1:0]$.

Let $\Sbar(2)$ be the natural compactification
\begin{align}
 V(Z^2-XY(X-Y)(\lambda_1X-\lambda_0Y))
  \subset \bP(1,1,2)_{X:Y:Z}\times\bP^1_{\lambda}
\end{align}
of $S'(2)$.
The natural morphism to $\bP^1_{\lambda}\cong X(2)$ has
three singular fibers of type $I_1$ at the cusps $\lambda=0,1,\infty$.
The surface $\Sbar(2)$ itself has three $A_1$-singularities
at the singular points of the singular fibers,
and we write its minimal resolution as $S(2)$.

\begin{proposition}\label{modular_surface_of_level_two}
The elliptic fibration $S(2) \to X(2)$ has
three singular fibers of type $I_2$
at $\lambda=0,1,\infty$,
and the natural action of $\SL(2, \bF_2)\ltimes (\bZ/2\bZ)^2$
on $S'(2)$
extends to $S(2)$.
\end{proposition}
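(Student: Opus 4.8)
The plan is to treat the two assertions separately: first to pin down the Kodaira type of the three special fibers by a purely local computation at each node, and then to extend the group action in two stages, from $S'(2)$ to the compactification $\Sbar(2)$ and from $\Sbar(2)$ to its minimal resolution $S(2)$.

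For the fiber types I would work in an \'etale neighborhood of each of the three singular points of $\Sbar(2)$. Near the cusp $\lambda=0$ the node sits at $[0:1:0]$, which lies in the smooth chart $Y\neq 0$ of $\bP(1,1,2)$; writing $x=X/Y$, $z=Z/Y^2$ and keeping $\lambda$ as the third coordinate, the surface is cut out by $z^2=x(x-1)(x-\lambda)$. Expanding about $(x,z,\lambda)=(0,0,0)$, the quadratic part is $z^2+x^2-x\lambda$, which is nondegenerate, so $\Sbar(2)$ has an ordinary double point there; this confirms the $A_1$ singularity, whose minimal resolution contributes a single smooth rational $(-2)$-curve $E$. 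The central fiber $C_0=\{z^2=x^2(x-1)\}$ is an irreducible nodal rational curve (an $I_1$ fiber) whose unique node is precisely this $A_1$ point. Its strict transform $\widetilde{C_0}$ is therefore the normalization $\bP^1$ of $C_0$, and upon resolving the surface the two branches at the node separate and meet $E$ transversally in the two points lying over the node. Thus the fiber of $S(2)$ over $\lambda=0$ is $\widetilde{C_0}\cup E$, a cycle of two rational curves, i.e. a fiber of type $I_2$; a short intersection-number check shows both components are $(-2)$-curves. The cusps $\lambda=1$ and $\lambda=\infty$ are handled by the identical computation in the charts $Y\neq 0$ and $X\neq 0$, yielding $I_2$ fibers there as well.

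For the group action I would first show that the action of $G:=\SL(2,\bF_2)\ltimes(\bZ/2\bZ)^2$ on $S'(2)$ extends to a biregular action on the projective surface $\Sbar(2)$. The subgroup $(\bZ/2\bZ)^2$ acts fiberwise by translation by the $2$-torsion; in the double-cover-of-$\bP^1$ presentation this is realized by the Klein four-group of M\"obius transformations of $[X:Y]$ permuting the four branch points $\{[0:1],[1:0],[1:1],[\lambda_0:\lambda_1]\}$ in pairs, together with a sign on $Z$, and it fixes $\lambda$. The factor $\SL(2,\bF_2)\cong\frakS_3$ acts on $X(2)=\bP^1_\lambda$ through the anharmonic group permuting the three cusps, lifted by the corresponding relabeling of the branch points. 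In either case the transformation is a (possibly $\lambda$-dependent) linear automorphism of the coordinates $(X,Y)$ together with a rescaling of $Z$ and an automorphism of $\bP^1_\lambda$; hence it is the restriction of a biregular automorphism of $\bP(1,1,2)\times\bP^1_\lambda$ preserving the defining equation, and it extends across the cusps by properness of $\Sbar(2)$ over $\bP^1_\lambda$. This realizes $G$ as a group of automorphisms of $\Sbar(2)$, which necessarily permutes its three $A_1$ points.

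Finally I would lift the action to $S(2)$. Since $S(2)\to\Sbar(2)$ is the minimal resolution of a normal projective surface, it is unique up to unique isomorphism, so every automorphism of $\Sbar(2)$ lifts uniquely to an automorphism of $S(2)$ commuting with the resolution morphism; applying this to each element of $G$ produces the desired extension. The step I expect to be the main obstacle is the verification that the resolved fiber is exactly $I_2$ rather than some larger configuration: this rests on the precise local claim that resolving the $A_1$ point at the node of the $I_1$ fiber separates the two branches, so that $\widetilde{C_0}$ meets the exceptional $(-2)$-curve in two distinct points. By contrast the extension of the group action is comparatively formal once the automorphisms of $\Sbar(2)$ are in hand, the only delicate point being biregularity across the three cusps, which follows from properness.
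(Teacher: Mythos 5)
The first half of your argument (the local $A_1$ computation at each node and the resulting $I_2$ fibers) is correct, and in fact supplies detail that the paper omits. The problem is in the second half: your claim that the $\SL(2,\bF_2)\ltimes(\bZ/2\bZ)^2$-action extends to a \emph{biregular} action on $\Sbar(2)$ is false for the translation subgroup $(\bZ/2\bZ)^2$, and this is precisely the subtle point of the proposition. A translation by a $2$-torsion section is given fiberwise by a $\lambda$-dependent linear map on $(X,Y)$ whose determinant vanishes at some of the cusps: for instance the translation by $[0:1:0]$ is $(X:Y:Z)\mapsto(\lambda Y:X:\lambda Z)$, which at $\lambda=0$ sends every point with $X\neq 0$ to the single point $[0:1:0]$, i.e.\ it contracts the $I_1$ fiber. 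Geometrically this is forced: the smooth locus of an $I_1$ fiber is $\bG_m$, whose $2$-torsion has order $2$, so the full $(\bZ/2\bZ)^2$ cannot act by translations there --- two of the four $2$-torsion sections pass through the node. Only after resolving, when the $I_2$ fiber has smooth locus $\bG_m\times\bZ/2\bZ$ with $2$-torsion $(\bZ/2\bZ)^2$, can the action become regular. Your appeal to ``properness of $\Sbar(2)$ over $\bP^1_\lambda$'' does not repair this: properness never upgrades a birational self-map to a morphism, and here the map genuinely fails to be one. Consequently your final step --- lifting automorphisms of $\Sbar(2)$ to its minimal resolution --- has nothing to apply to.

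The paper's proof goes the other way around: it writes the three translations explicitly as birational self-maps of $\Sbar(2)$, observes that they are \emph{merely birational} there, and checks by direct local calculation that they become genuine automorphisms on $S(2)$. To fix your argument you should either carry out that computation in coordinates near the exceptional $(-2)$-curves, or replace the minimal-resolution lemma by the correct general principle: $S(2)\to X(2)$ is a relatively minimal elliptic fibration (no $(-1)$-curves in fibers), and any birational self-map of a relatively minimal elliptic surface compatible with an automorphism of the base is automatically biregular; in particular translations by sections of the smooth locus always extend to the relatively minimal model. The $\SL(2,\bF_2)\cong\frakS_3$ factor, which permutes the three cusps and the three nodes, can be handled by the same principle (or, as you suggest, may well already be biregular on $\Sbar(2)$, but that requires checking the determinants of the corresponding $\lambda$-dependent linear maps, which you did not do).
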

\begin{proof}
In the coordinates of $\Sbar(2)$, the three translations by the points $(1:1:0),
(\lambda_0:\lambda_1:0), (0:1:0)$ of order two can be written respectively as follows:
\begin{align}
(X:Y:Z) &\mapsto (\lambda(X-Y): X-\lambda Y: \lambda (\lambda -1)Z)\\
(X:Y:Z) &\mapsto (-X+\lambda Y: -X+Y: (\lambda -1)Z)\\
(X:Y:Z) &\mapsto (\lambda Y: X: \lambda Z)
\end{align}
By direct calculations, we can check that these are merely birational maps on $\Sbar(2)$ but
are genuine automorphisms on $S(2)$.
\end{proof}

Consider the three-fold $S(2)\times_{X(2)}S(2)$,
which is a compacification
of the $\bZ/2\bZ\times \bZ/2\bZ$-covering of the quotient
$
 \bH \times \bC \times \bC / \Gamma(2) \ltimes \bZ^4
$
by the action
\begin{align}
 (\gamma, m_1, n_1, m_2, n_2) \colon
  (\tau, z_1, z_2) \mapsto
   \lb \gamma(\tau), \frac{z_1+m_1\tau+n_1}{c\tau+d},
     \frac{z_2+m_2\tau+n_2}{c\tau+d} \rb.
\end{align}
To a point
$
 p=(\tau, z_1, z_2)\in S'(2)\times_{X'(2)}S'(2),
$
one can associate the quadruple
\begin{align} \label{eq:ell_quad}
 (E_\tau, \scrO(2o), \scrO(2z_1), \scrO(2z_2)),
\end{align}
where
$
 o = [0] \in E_\tau = \bC / \bZ \oplus \bZ \tau
$
is the origin.
The quadruple \eqref{eq:ell_quad} is admissible
if $2 z_2 \not \sim 2 o$.
Two elliptic quadruples
$
 (E_\tau, \scrO(2o), \scrO(2z_1), \scrO(2z_2))
$
and
$
 (E_{\tau'}, \scrO(2o), \scrO(2z_1'), \scrO(2z_2'))
$
are isomorphic if
\begin{enumerate}
 \item
the elliptic curves $E_\tau$ and $E_{\tau'}$ are isomorphic,
i.e., $\tau' = \gamma(\tau)$ for some $\gamma \in \SL_2(\bZ)$, and
 \item
$2 z_i \sim 2 z_i'$ for $i = 1,2,$,
i.e., $z_i$ and $z_i'$ are related by 2-torsion translation.
\end{enumerate}
Together, they form the group
\begin{align}
 G=\SL_2(\bF_2) \ltimes (\bZ/2\bZ)^4
\end{align}
acting on $S'(2) \times_{X'(2)} S'(2)$, and
one obtains the following:

\begin{proposition} \label{pr:quadruple}
Two points
$
 p, q \in S'(2)\times_{X'(2)}S'(2)
$
belong to the same orbit of the action of $G$
if and only if the associated quadruples are isomorphic.
\end{proposition}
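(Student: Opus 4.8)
The plan is to prove both implications by unwinding the definition of isomorphism of admissible quadruples: an isomorphism between the quadruples attached to $p=(\tau,z_1,z_2)$ and $q=(\tau',z_1',z_2')$ is an isomorphism of curves $\varphi\colon E_\tau\to E_{\tau'}$ with $\varphi^*\scrO(2z_i')\cong\scrO(2z_i)$ for $i=0,1,2$, where I abbreviate $z_0=z_0'=o$. The first observation I would record is that the quadruple depends only on the classes $\scrO(2z_i)$, hence only on the points $2z_i\in E_\tau$; replacing $z_i$ by $z_i+t_i$ with $t_i\in E_\tau[2]$ therefore leaves the quadruple literally unchanged. This accounts in one stroke for the factor $(\bZ/2\bZ)^4=E_\tau[2]\times E_\tau[2]$ of $G$, and shows that the whole construction factors through the quotient by these $2$-torsion translations.

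For the implication from $G$-orbits to isomorphisms it then suffices to treat the $\SL_2(\bF_2)$-factor. Given $\bar\gamma\in\SL_2(\bF_2)$ I would lift it to $\gamma\in\SL_2(\bZ)$ and use the multiplier isomorphism $\psi_\gamma\colon E_\tau\to E_{\gamma\tau}$, $z\mapsto z/(c\tau+d)$. Being a group isomorphism, $\psi_\gamma$ fixes the origin, so $\psi_\gamma^*\scrO(2o)\cong\scrO(2o)$, while $\psi_\gamma^*\scrO(2\psi_\gamma(z_i))\cong\scrO(2z_i)$ tautologically; hence $\psi_\gamma$ realizes an isomorphism of the quadruple attached to $p$ with the one attached to $\bar\gamma\cdot p$, and every $G$-orbit maps into a single isomorphism class.

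For the converse I would start from an arbitrary such $\varphi$ and decompose it, after fixing origins, as $\varphi=(\text{translation by }c)\circ(\epsilon\cdot\psi_\gamma)$ with $\epsilon\in\{\pm1\}$, some $\gamma\in\SL_2(\bZ)$ realizing $\tau'=\gamma\tau$, and $c\in E_{\tau'}$. The condition $\varphi^*\scrO(2o)\cong\scrO(2o)$ forces $c$ to be a $2$-torsion point, and the conditions for $i=1,2$ become $2z_i'=\epsilon\,\psi_\gamma(2z_i)$ in $E_{\tau'}$, that is $z_i'\equiv\epsilon\,\psi_\gamma(z_i)\pmod{E_{\tau'}[2]}$, with one common sign $\epsilon$ and one common $\gamma$. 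The class $\bar\gamma\in\SL_2(\bF_2)$ supplies the $\SL_2(\bF_2)$-part of the required $g\in G$, the two independent $2$-torsion ambiguities supply the $(\bZ/2\bZ)^4$-part, and it remains only to absorb the sign $\epsilon$ together with the fact that $\gamma$ is well defined only modulo $\Gamma(2)$.

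The hard part will be exactly this last step. The sign $\epsilon=-1$ is the simultaneous negation $(z_1,z_2)\mapsto(-z_1,-z_2)$, i.e. the action of $-I\in\SL_2(\bZ)$; since $-I\equiv I\pmod 2$ it lies in $\Gamma(2)$ and is invisible in $\SL_2(\bF_2)$. My approach is to show that this involution acts trivially on $S'(2)\times_{X'(2)}S'(2)$, which is where the specific construction of the threefold enters: it is built from $\bH\times\bC\times\bC$ by dividing out the diagonal action of $\Gamma(2)\ltimes\bZ^4$, and $-I\in\Gamma(2)$ acts there precisely as $(\tau,z_1,z_2)\mapsto(\tau,-z_1,-z_2)$, so simultaneous negation has already been quotiented out. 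Once this is established the sign $\epsilon$ drops out and $q$ is exhibited as $g\cdot p$ for an explicit $g\in G$. The one genuine subtlety is to confirm that $\Gamma(2)$ contributes no symmetry beyond $-I$: using $\psi_{-I}=-1$ together with $\psi_\gamma=+1$ for the two parabolic generators of $\Gamma(2)$, I would check that on a generic fibre $E_\tau\times E_\tau$ the parabolic elements act trivially and only $-I$ acts nontrivially, so that the residual fibrewise symmetry coming from $\SL_2(\bZ)$ is exactly $\SL_2(\bZ)/\Gamma(2)=\SL_2(\bF_2)$. Combined with the half-lattice translations $E_\tau[2]\times E_\tau[2]$ this shows $G=\SL_2(\bF_2)\ltimes(\bZ/2\bZ)^4$ is neither too large nor too small, completing the equivalence.
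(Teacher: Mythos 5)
You have correctly identified that the paper gives essentially no argument for \pref{pr:quadruple} --- it is stated as an immediate consequence of the preceding description of when two elliptic quadruples are isomorphic --- and your plan supplies the missing content along the natural lines: the quadruple depends only on the points $2z_i$, hence is invariant under the $(\bZ/2\bZ)^4$ of $2$-torsion translations; any curve isomorphism decomposes as a translation (forced to be $2$-torsion by the condition on $\scrO(2o)$) composed with $\pm\psi_\gamma$, where $\psi_\gamma(z)=z/(c\tau+d)$; and the class of $\gamma$ modulo $\Gamma(2)$ supplies the $\SL_2(\bF_2)$-part. You have also correctly isolated the one dangerous point, namely the sign $\epsilon$.

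Your resolution of that point, however, is incompatible with the paper's actual construction of $S'(2)$. You argue that simultaneous negation $(\tau,z_1,z_2)\mapsto(\tau,-z_1,-z_2)$ is already trivial on $S'(2)\times_{X'(2)}S'(2)$ because $-I\in\Gamma(2)$. But $S'(2)$ is \emph{not} the quotient $\bH\times\bC/\Gamma(2)\ltimes\bZ^2$: it is the branched double cover of that quotient along the four $2$-torsion sections, identified with the Weierstrass family $Z^2=XY(X-Y)(\lambda_1X-\lambda_0Y)$ in $\bP(1,1,2)\times\bA^1_\lambda$. On that family the fibrewise negation is the nontrivial deck transformation $[X:Y:Z]\mapsto[X:Y:-Z]$; it is precisely the involution that the branched double cover reintroduces after $-I$ has collapsed it. Hence simultaneous negation is a nontrivial involution of the threefold, and on a fibre $E_\lambda\times E_\lambda$ over a $\lambda$ with trivial stabilizer in $\SL_2(\bF_2)$ no element of $G$ can realize it (the identity of $\SL_2(\bF_2)$ must act trivially, and the $(\bZ/2\bZ)^4$-part acts by translations), while the quadruples attached to $(z_1,z_2)$ and $(-z_1,-z_2)$ are isomorphic via $[-1]\colon E_\lambda\to E_\lambda$. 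So the implication from isomorphism of quadruples to equality of $G$-orbits is not established by your argument; closing the gap requires either enlarging $G$ by the simultaneous fibrewise negation or replacing the threefold by its quotient under that involution (an extra $\bZ/2\bZ$ that is harmless for the birational statement in \pref{th:main}~(\pref{it:quadruple}), but fatal for the proposition as you have argued it). A secondary, smaller issue: your check of the residual symmetry is carried out only on generic fibres, so the fibres with $j(E_\lambda)\in\{0,1728\}$, where $E_\lambda$ has automorphisms beyond $\pm1$, still need a separate word.
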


\pref{th:main} (\pref{it:quadruple}) follows from
\pref{pr:quadruple}
and the equivalence
between \eqref{it:elliptic_quintuple}
and \eqref{it:admissible_quadruple}.

\section{Potentials}
 \label{sc:potential}

One can use the homomorphism \eqref{eq:accidental}
to identify the two rings
$
 \bC \ld \Sym^2(V_0 \otimes V_1) \rd
$
and $\bC \ld \Sym^2(U) \rd$
in such a way that the invariant subrings are isomorphic;
\begin{align} \label{eq:pot2}
 \bC \ld \Sym^2(V_0 \otimes V_1) \rd^{\SL(V_0) \times \SL(V_1)}
  \cong \bC \ld \Sym^2(U) \rd^{\SO(U)}.
\end{align}
Any symmetric matrix $X \in \Sym^2(U)$ can be diagonalized
by the conjugate action
\begin{align}
 g \cdot X = g X g^T
\end{align}
of $\SO(U)$.
The entries $x_1,\ldots,x_4$
of the resulting diagonal matrix is unique up to
permutations.
As a consequence,
one has an isomorphism
\begin{align}
 \bC \ld \Sym^2(U) \rd^{\SO(U)}
  \cong \bC[x_1,x_2,x_3,x_4]^{\frakS_4}
  = \bC[f_1, f_2, f_3, f_4],
\end{align}
where 
\begin{align}
 f_{d} &= \sum_{i=1}^{4}x_i^{d}.
\end{align}
The corresponding elements
of $\bC \ld \End U \rd^{\SO(U)}$ are
\begin{align}
 f_{d}(X) &= \tr \lb X^d \rb.
\end{align}
It follows that
\begin{align}
 &\text{an element $X \in \Sym^2(U)$ is unstable
if and only if $X$ is nilpotent, and}\\
 &\text{the GIT quotient is given by }
 \Proj \bC[f_1,f_2,f_3,f_4]
  = \bP(1,2,3,4).
  \hspace{40mm}
\end{align}

With a potential
$
 \Phi \in \Sym^2(V_0 \otimes V_1)
  \subset (V_0 \otimes V_1) \otimes (V_0 \otimes V_1),
$
one can associate the quintuple
$(V_0, V_1, V_0, V_1, \bC \Phi)$.
This gives a morphism
\begin{align}
 F \colon \cMpot \to \cMqui
\end{align}
of stacks,
which induces a rational map
of the corresponding GIT quotients.
This rational map can be identified with the natural projection
from
$
 \Mpotbar \cong \bP^3 / \frakS_3
$
to
$
 \Mpotbar \cong \bP^3 / (\frakS_3 \ltimes (\bZ/2\bZ)^3),
$
and hence a posteriori is a morphism.
Since the action
$
 [x_1:x_2:x_3:x_4]
  \mapsto [-x_1:-x_2:-x_3:-x_4]
$
is trivial on $\bP^3$,
the degree of this morphism coincides
with the cardinality of the group
$
 (\bZ/2\bZ)^3/(\bZ/2\bZ)\cong(\bZ/2\bZ)^2,
$
which is four.
The invariants $g_4$ and $f_6$
can be described in $\bC[f_1,f_2,f_3,f_4]$ as
\begin{align}
 g_4 &= \frac{f_1^4}{24}-\frac{f_1^2 f_2}{4}+\frac{f_1 f_3}{3}
 +\frac{f_2^2}{8}-\frac{f_4}{4},\\
 f_6 &= -\frac{f_1^6}{24}+\frac{3 f_1^4 f_2}{8}-\frac{2 f_1^3 f_3}{3}
       -\frac{3 f_1^2 f_2^2}{8}+\frac{3 f_1^2 f_4}{4}-\frac{f_2^3}{8}
       +\frac{3 f_2 f_4}{4}+\frac{f_3^2}{3}.
\end{align}

\section{Noncommutative conifolds}
 \label{sc:conifold}
 
A {\em quiver} $(Q_0, Q_1, s, t)$ consists of
a set $Q_0$ of vertices,
a set $Q_1$ of arrows, and
two maps $s, t: Q_1 \to Q_0$ from $Q_1$ to $Q_0$.
The vertices $s(a)$ and $t(a)$
are called the {\em source}
and the {\em target} of the arrow $a$.
A {\em path} on a quiver
is an ordered set of arrows
$(a_n, a_{n-1}, \dots, a_{1})$
such that $s(a_{k+1}) = t(a_k)$
for $k=1, \dots, n-1$.
We also allow for a path $e_i$ of length zero,
starting and ending at the same vertex $i \in Q_0$.
The {\em path algebra} $\bC Q$
of a quiver $Q$ is the algebra
spanned by the set of paths
as a vector space,
and the multiplication is defined
by the concatenation of paths;
$$
 (b_m, \dots, b_1) \cdot (a_n, \dots, a_1)
  = \begin{cases}
     (b_m, \dots, b_1, a_n, \dots, a_1) & s(b_1) = t(a_n), \\
      0 & \text{otherwise}.
    \end{cases}
$$
A \emph{cyclic path} is a path $(a_n, \ldots, a_1)$
such that $s(a_1) = t(a_n)$.
There is a cyclic group action
$(a_n, \ldots, a_1) \mapsto (a_{n-1}, \ldots, a_1, a_n)$
on the set of cyclic paths.
A \emph{potential}
is an element $\Phi$
of the invariant subspace $(\bC Q)^\cycl$
of the subspace of $\bC Q$
spanned by cyclic paths.
The partial derivative of a path
by an arrow $a$ is defined by
\begin{align}
 \frac{\partial (a_n, \ldots, a_1)}{\partial b} =
\begin{cases}
 (a_{n-1}, \ldots, a_1) & a_n = b, \\
 0 & \text{otherwise}.
\end{cases}
\end{align}
The \emph{Jacobi algebra}
of a quiver $(Q, \Phi)$ with potential is the quotient
$
 \bC Q / (\partial \Phi)
$
of the path algebra $\bC Q$
by the two-sided ideal
\begin{align}
 (\partial \Phi) &=
  \lb
   \frac{\partial \Phi}{\partial a}
  \rb_{a \in Q_1}
\end{align}
generated by the partial derivatives.
The path algebra of a quiver has a natural grading
coming from the length of paths.
The Jacobi algebra $\bC Q/(\partial \Phi)$
inherits this grading
if the potential $\Phi$ is homogeneous.
 
An algebra $A$ is \emph{homologically smooth}
if $A$ has finite projective dimension
as an $A$-bimodule
(i.e. as a module over $A^e = A \otimes_{\bC} A^\op$).
A homologically smooth algebra is a
\emph{Calabi-Yau algebra of dimension 3}
if there is an isomorphism
$
 f \colon A \simto A^![3]
$
such that $f=f^![3]$,
where $A^! = \RHom_{A^e}(A, A^e)$
is the inverse of the rigid dualizing complex
\cite[Definition 3.2.3]{Ginzburg_CYA}.

Let $A$ be a flat deformation
of the noncommutative crepant resolution
$A_0 = \bC Q/(\partial \Phi_0)$
of the conifold,
where $Q$ is the quiver in \pref{fg:conifold_quiver}
and $\Phi_0$ is the potential in \eqref{eq:conifold_potential}.
We assume that $A$ is a graded Calabi-Yau algebra
of dimension 3,
so that it can be described as the Jacobi algebra
$\bC Q/(\partial \Phi)$
of a quiver $(Q, \Phi)$ with potential
by \cite[Theorem 3.1]{Bocklandt_GCYA}.
The flatness of the deformation
implies the invariance of the Hilbert series
of the graded ring.
It follows that the degree of the potential $\Phi$
must be preserved under deformation.

Let $\Qtilde$ be the quiver in \pref{fg:double_quiver}
obtained as the the double cover of the quiver $Q$.
The pull-back of the potential $\Phi$
by the natural homomorphism $\bC \Qtilde \to \bC Q$
gives a potential $\Phitilde$ of the quiver $\Qtilde$.
The quotient
of the Jacobi algebra $\bC \Qtilde / (\partial \Phitilde)$
by the two-sided ideal generated by $b_1'$ and $b_2'$
gives the path algebra of the quiver in \pref{fg:P1P1_quiver}
with two relations among
eight paths from $v_{0,0}$ to $v_{1,1}$.
These two relations can be regarded
either as a two-dimensional subspace
of $V_0 \otimes V_1 \otimes V_0$
or a one-dimensional subspace
of $V_0 \otimes V_1 \otimes V_0 \otimes V_1$.
The latter coincides
with the $V_0 \otimes V_1 \otimes V_0 \otimes V_1$ component
of the potential $\Phitilde$,
which we write as $\Phi$ by abuse of notation.
In this way,
one obtains a quintuple
$(V_0, V_1, V_0, V_1, \bC \Phi)$
from a potential $\Phi$.
The potential $\Phi$ is said to be \emph{geometric}
if the associated quintuple is geometric.

It follows from the construction of the $\bZ$-algebra
$A = \bigoplus_{i,j \in \bZ} A_{ij}$
from a quintuple that 
the Jacobi algebra $\bC Q/(\partial \Phi)$
is related to the $\bZ$-algebra by
\begin{align}
 e_i \bC Q/(\partial \Phi) e_j
  &=
\begin{cases}
 \bigoplus_{n=0}^\infty A_{0,2n} & i = j, \\
 \bigoplus_{n=0}^\infty A_{0,2n+1} & i \ne j.
\end{cases}
\end{align}
The flatness of the deformation
is equivalenct to the geometricity
of the potential
\cite[Lemma 4.9]{MR2836401}.
Geometricity of $\Phi$ implies that
the associated $\bZ$-algebra $A$ is
AS-regular of dimension 3
\cite[Theorem 4.31]{MR2836401},
which in turn implies that
$\bC Q/(\partial \Phi)$ is Calabi-Yau of dimension 3.
The subring $R=e_0 \bC Q/(\partial \Phi) e_0$
of the Jacobi algebra
of a geometric potential
will be called a \emph{noncommutative conifold}.

A \emph{categorical resolution}
{\cite[Definition 1.3]{1212.6170}
(cf.~also~\cite{MR2403307,MR2609187})
of a ring $R$
is a smooth cocomplete
compactly generated triangulated category $\scrT$
and an adjoint pair
$
 \pi^* : D(R) \rightleftharpoons \scrT : \pi_*
$
of functors
such that
\begin{enumerate}
 \item
the natural morphism
$
 \id_{D(R)} \to \pi_* \pi^*
$
of functors is an isomorphism,
 \item
both $\pi^*$ and $\pi_*$ commute
with arbitrary direct sum, and
 \item
$\pi_*(\scrT^c) \subset D^b \module (R)$.
\end{enumerate}
When $R$ is the subring
$e_0 \bC Q / (\partial \Phi) e_0$
of the Jacobi algebra $\bC Q / (\partial \Phi)$
of the conifold quiver
with a geometric potential $\Phi$,
the adjoint pair $\pi^* \vdash \pi_*$
of the functors
\begin{align}
 \pi_* : D(\bC Q / (\partial \Phi)) \to D(R), \quad
  M \mapsto M e_0
\end{align}
and
\begin{align}
 \pi^* : D(R) \to D(\bC Q / (\partial \Phi)), \quad
  N \mapsto N \otimes_R e_0 \bC Q / (\partial \Phi)
\end{align}
is a categorical resolution
of the noncommutative conifold $R$.

\section{Donaldson-Thomas invariants}
 \label{sc:DT}

\begin{figure}[t]
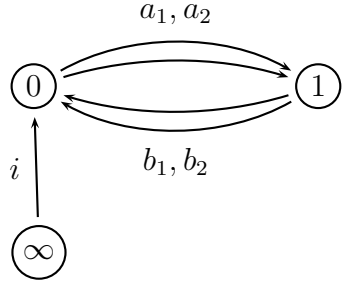

\centering
\vspace{8mm}
$
\begin{psmatrix}[mnode=circle]
0 & & 1 \\
\infty
\psset{nodesep=3pt,arrows=->,shortput=tablr}
\ncarc[arcangle=17]{1,1}{1,3}
\ncarc[arcangle=30]{1,1}{1,3}^{a_1, a_2}
\ncarc[arcangle=17]{1,3}{1,1}
\ncarc[arcangle=30]{1,3}{1,1}_{b_1, b_2}
\ncline{2,1}{1,1}<{i}
\end{psmatrix} 
$
\vspace{0mm}
\caption{The framed conifold quiver}
\label{fg:framed_conifold_quiver}
\end{figure}

Let $\Qtilde$ be the quiver
in \pref{fg:framed_conifold_quiver},
equipped with the potential $\Phitilde$
inherited from a potential $\Phi$
of the conifold quiver $Q$.
A \emph{framed representation}
of the conifold quiver
is a module over the Jacobi algebra
$\bC \Qtilde/(\partial \Phitilde)$,
i.e., a collection
$
 M=((V_0,V_1,V_\infty),(A_1,A_2,B_1,B_2,I))
$
of vector spaces $(V_0,V_1,V_\infty)$
and maps
\begin{align}
 A_i &\colon V_0 \to V_1, \quad i=1,2, \\
 B_i &\colon V_1 \to V_0, \quad i=1,2, \\
 I &\colon V_\infty \to V_0
\end{align}
satisfying the relation $(\partial \Phitilde)$.
The \emph{dimension vector}
of $M$ is given by
\begin{align}
 v
  = (v_0, v_1, v_\infty)
  = (\dim V_0, \dim V_1, \dim V_\infty).
\end{align}
A \emph{stability parameter} is an element
$
 \theta
  = (\theta_0, \theta_1, \theta_\infty)
$
of $\bR^3$.
The \emph{slope} of a representation
$
 M=((V_0,V_1,V_\infty),(A_1,A_2,B_1,B_2,I))
$
with respect to the stability parameter $\theta$
is defined by
\begin{align}
 \theta(M)
  = \frac{\theta_0 \dim V_0 + \theta_1 \dim V_1 + \theta_\infty \dim V_\infty}
   {\dim V_0 + \dim V_1 + \dim V_\infty}.
\end{align}
A representation $M$ is $\theta$-stable
if one has
\begin{align}
 \theta(M') < \theta(M).
\end{align}
for any non-zero proper submodule $M' \subset M$.
The moduli space
of framed $\theta$-stable representations
wit dimension $v$
will be denoted by $\cM_\theta(v)$.
The \emph{numerical DT invariant}
is defined as the weighted Euler characteristic
\eqref{eq:DT}
with respect to the Behrend function on $\cM_\theta(v)$.
We expect that the numerical DT invariant is independent
of the potential $\Phi$.

The numerical DT invariant has a motivic refinement,
which is defined in terms of \emph{virtual motives}
\cite{MR3032328}.
When the potential is the one
given in \eqref{eq:conifold_potential},
then $\cM_\theta(v)$ for
$v=(1,1,1)$ and $\theta = (-1,-1,2)$
is the resolved conifold
\begin{align} \label{eq:conifold_stack}
 Y = [\lb \bA^4 \setminus \{(0,0)\} \times \bA^2 \rb/\bCx]
  \cong \cSpec_{\bP^1}
   \lb \cSym^* \lb \cO_{\bP^1}(1) \oplus \cO_{\bP^1}(1) \rb \rb,
\end{align}
and the virtual motive is given by
\begin{align} \label{eq:motive1}
 [Y]_\vir = \bL^{-3/2} [Y],
\end{align}
where $\bL$ is the Lefschetz motive.
When the potential is given by
\begin{align}
 \Phi = \alpha a_1 b_1 a_1 b_1 + \beta a_1 b_2 a_1 b_2
  + \gamma a_2 b_1 a_2 b_1 + \delta a_2 b_2 a_2 b_2,
\end{align}
for general $\alpha$, $\beta$, $\gamma$ and $\delta$,
the moduli space $\cM_\theta(v)$
is the subscheme of $Y$ defined by
\begin{align}
 a_1(\alpha b_1^2+\beta b_2^2) &= 0, \\
 a_2(\gamma b_1^2+\delta b_2^2) &= 0, \\
 b_1(\alpha a_1^2+\beta a_2^2) &= 0, \\
 b_2(\gamma a_1^2+\delta a_2^2) &= 0.
\end{align}
The motivic DT invariant in this case is given by
\begin{align}
 -\bL^{-3/2} \lb [\Phi^{-1}(1)]-[\Phi^{-1}(0)] \rb
\end{align}
by \cite[Theorem B.1]{MR3032328},
where $\Phi$ is considered
as a map from $Y$ to $\bA^1$.
Being the difference of two subschemes,
this motive is clearly distinct
from \eqref{eq:motive1}.

\bibliographystyle{amsalpha}
\bibliography{bibs}

\def\cprime{$'$} \def\cprime{$'$}
\providecommand{\bysame}{\leavevmode\hbox to3em{\hrulefill}\thinspace}
\providecommand{\MR}{\relax\ifhmode\unskip\space\fi MR }
\providecommand{\MRhref}[2]{%
  \href{http://www.ams.org/mathscinet-getitem?mr=#1}{#2}
}
\providecommand{\href}[2]{#2}
\begin{thebibliography}{VDDMV02}

\bibitem[Ati58]{MR0095974}
M.~F. Atiyah, \emph{On analytic surfaces with double points}, Proc. Roy. Soc.
  London. Ser. A \textbf{247} (1958), 237--244. \MR{0095974 (20 \#2472)}

\bibitem[BBS13]{MR3032328}
Kai Behrend, Jim Bryan, and Bal{\'a}zs Szendr{\H{o}}i, \emph{Motivic degree
  zero {D}onaldson-{T}homas invariants}, Invent. Math. \textbf{192} (2013),
  no.~1, 111--160. \MR{3032328}

\bibitem[BCFKvS98]{Batyrev-Ciocan-Fontanine-Kim-van_Straten_CT}
Victor~V. Batyrev, Ionu{\c{t}} Ciocan-Fontanine, Bumsig Kim, and Duco van
  Straten, \emph{Conifold transitions and mirror symmetry for {C}alabi-{Y}au
  complete intersections in {G}rassmannians}, Nuclear Phys. B \textbf{514}
  (1998), no.~3, 640--666. \MR{MR1619529 (99m:14074)}

\bibitem[Beh09]{MR2600874}
Kai Behrend, \emph{Donaldson-{T}homas type invariants via microlocal geometry},
  Ann. of Math. (2) \textbf{170} (2009), no.~3, 1307--1338. \MR{2600874
  (2011d:14098)}

\bibitem[BO]{Bondal-Orlov_semiorthogonal}
A.~Bondal and D.~Orlov, \emph{Semiorthogonal decomposition for algebraic
  varieties}, arXiv:alg-geom/9506012.

\bibitem[Boc08]{Bocklandt_GCYA}
Raf Bocklandt, \emph{Graded {C}alabi {Y}au algebras of dimension 3}, J. Pure
  Appl. Algebra \textbf{212} (2008), no.~1, 14--32. \MR{MR2355031
  (2008h:16013)}

\bibitem[BP93]{MR1230966}
A.~I. Bondal and A.~E. Polishchuk, \emph{Homological properties of associative
  algebras: the method of helices}, Izv. Ross. Akad. Nauk Ser. Mat. \textbf{57}
  (1993), no.~2, 3--50. \MR{1230966 (94m:16011)}

\bibitem[Bri02]{Bridgeland_FDC}
Tom Bridgeland, \emph{Flops and derived categories}, Invent. Math. \textbf{147}
  (2002), no.~3, 613--632. \MR{MR1893007 (2003h:14027)}

\bibitem[DT98]{MR1634503}
S.~K. Donaldson and R.~P. Thomas, \emph{Gauge theory in higher dimensions}, The
  geometric universe ({O}xford, 1996), Oxford Univ. Press, Oxford, 1998,
  pp.~31--47. \MR{1634503 (2000a:57085)}

\bibitem[Fri86]{MR848512}
Robert Friedman, \emph{Simultaneous resolution of threefold double points},
  Math. Ann. \textbf{274} (1986), no.~4, 671--689. \MR{848512 (87k:32035)}

\bibitem[Gin06]{Ginzburg_CYA}
Victor Ginzburg, \emph{{C}alabi-{Y}au algebras}, math.AG/0612139, 2006.

\bibitem[KL]{1212.6170}
Alexander Kuznetsov and Valery Lunts, \emph{Categorical resolutions of
  irrational singularities}, arXiv:1212.6170.

\bibitem[Kuz08]{MR2403307}
Alexander Kuznetsov, \emph{Lefschetz decompositions and categorical resolutions
  of singularities}, Selecta Math. (N.S.) \textbf{13} (2008), no.~4, 661--696.
  \MR{2403307 (2009h:18022)}

\bibitem[KW99]{MR1666725}
Igor~R. Klebanov and Edward Witten, \emph{Superconformal field theory on
  threebranes at a {C}alabi-{Y}au singularity}, Nuclear Phys. B \textbf{536}
  (1999), no.~1-2, 199--218. \MR{1666725 (99k:81253)}

\bibitem[LT03]{MR2039690}
Jean-Gabriel Luque and Jean-Yves Thibon, \emph{Polynomial invariants of four
  qubits}, Phys. Rev. A (3) \textbf{67} (2003), no.~4, 042303, 5. \MR{2039690
  (2004k:81098)}

\bibitem[Lun10]{MR2609187}
Valery~A. Lunts, \emph{Categorical resolution of singularities}, J. Algebra
  \textbf{323} (2010), no.~10, 2977--3003. \MR{2609187 (2011d:18019)}

\bibitem[MMNS12]{MR2927365}
Andrew Morrison, Sergey Mozgovoy, Kentaro Nagao, and Bal{\'a}zs Szendr{\H{o}}i,
  \emph{Motivic {D}onaldson-{T}homas invariants of the conifold and the refined
  topological vertex}, Adv. Math. \textbf{230} (2012), no.~4-6, 2065--2093.
  \MR{2927365}

\bibitem[MN]{Morrison-Nagao}
A.~{Morrison} and K.~{Nagao}, \emph{{Motivic Donaldson-Thomas invariants of
  toric small crepant resolutions}}, arXiv:1110.5976.

\bibitem[MNOP06]{Maulik-Nekrasov-Okounkov-Pandharipande_I}
D.~Maulik, N.~Nekrasov, A.~Okounkov, and R.~Pandharipande,
  \emph{Gromov-{W}itten theory and {D}onaldson-{T}homas theory. {I}}, Compos.
  Math. \textbf{142} (2006), no.~5, 1263--1285. \MR{MR2264664 (2007i:14061)}

\bibitem[Nag12]{MR2999994}
Kentaro Nagao, \emph{Derived categories of small toric {C}alabi-{Y}au 3-folds
  and curve counting invariants}, Q. J. Math. \textbf{63} (2012), no.~4,
  965--1007. \MR{2999994}

\bibitem[NN11]{MR2836398}
Kentaro Nagao and Hiraku Nakajima, \emph{Counting invariant of perverse
  coherent sheaves and its wall-crossing}, Int. Math. Res. Not. IMRN (2011),
  no.~17, 3885--3938. \MR{2836398 (2012h:14141)}

\bibitem[OU]{1402.3768}
Shinnosuke Okawa and Kazushi Ueda, \emph{Quantum entanglement, {C}alabi-{Y}au
  manifolds, and noncommutative algebraic geometry}, arXiv:1402.3768.

\bibitem[OV00]{MR1765411}
Hirosi Ooguri and Cumrun Vafa, \emph{Knot invariants and topological strings},
  Nuclear Phys. B \textbf{577} (2000), no.~3, 419--438. \MR{1765411
  (2001i:81254)}

\bibitem[Rei87]{MR909231}
Miles Reid, \emph{The moduli space of {$3$}-folds with {$K=0$} may nevertheless
  be irreducible}, Math. Ann. \textbf{278} (1987), no.~1-4, 329--334.
  \MR{909231 (88h:32016)}

\bibitem[Sze08]{Szendroi_NCDT}
Bal{\'a}zs Szendr{\H{o}}i, \emph{Non-commutative {D}onaldson-{T}homas
  invariants and the conifold}, Geom. Topol. \textbf{12} (2008), no.~2,
  1171--1202. \MR{MR2403807 (2009e:14100)}

\bibitem[Tan79]{MR574272}
Sh{\^u}kichi Tanno, \emph{Geodesic flows on {$C\sb{L}$}-manifolds and
  {E}instein metrics on {$S\sp{3}\times S\sp{2}$}}, Minimal submanifolds and
  geodesics ({P}roc. {J}apan-{U}nited {S}tates {S}em., {T}okyo, 1977),
  North-Holland, Amsterdam, 1979, pp.~283--292. \MR{574272 (81g:58027)}

\bibitem[Tho00]{MR1818182}
R.~P. Thomas, \emph{A holomorphic {C}asson invariant for {C}alabi-{Y}au
  3-folds, and bundles on {$K3$} fibrations}, J. Differential Geom. \textbf{54}
  (2000), no.~2, 367--438. \MR{1818182 (2002b:14049)}

\bibitem[vdB04a]{Van_den_Bergh_NCR}
Michel van~den Bergh, \emph{Non-commutative crepant resolutions}, The legacy of
  Niels Henrik Abel, Springer, Berlin, 2004, pp.~749--770. \MR{MR2077594
  (2005e:14002)}

\bibitem[VdB04b]{Van_den_Bergh_TFNR}
Michel Van~den Bergh, \emph{Three-dimensional flops and noncommutative rings},
  Duke Math. J. \textbf{122} (2004), no.~3, 423--455. \MR{MR2057015
  (2005e:14023)}

\bibitem[VdB11]{MR2836401}
\bysame, \emph{Noncommutative quadrics}, International Mathematics Research
  Notices. IMRN (2011), no.~17, 3983--4026. \MR{MR2836401 (2012m:14004)}

\bibitem[VDDMV02]{MR1910235}
F.~Verstraete, J.~Dehaene, B.~De~Moor, and H.~Verschelde, \emph{Four qubits can
  be entangled in nine different ways}, Phys. Rev. A (3) \textbf{65} (2002),
  no.~5, part A, 052112, 5. \MR{1910235 (2003c:81033)}

\bibitem[Wal08]{MR2409702}
Nolan~R. Wallach, \emph{Quantum computing and entanglement for mathematicians},
  Representation theory and complex analysis, Lecture Notes in Math., vol.
  1931, Springer, Berlin, 2008, pp.~345--376. \MR{2409702 (2010g:81082)}

\bibitem[You09]{Young_CPP}
Ben Young, \emph{Computing a pyramid partition generating function with dimer
  shuffling}, J. Combin. Theory Ser. A \textbf{116} (2009), no.~2, 334--350.
  \MR{MR2475021 (2009k:05016)}

\bibitem[You10]{MR2643058}
Benjamin Young, \emph{Generating functions for colored 3{D} {Y}oung diagrams
  and the {D}onaldson-{T}homas invariants of orbifolds}, Duke Math. J.
  \textbf{152} (2010), no.~1, 115--153, With an appendix by Jim Bryan.
  \MR{2643058 (2011b:14125)}

\end{thebibliography}

\ \vspace{0mm} \\

\noindent
Shinnosuke Okawa

Department of Mathematics,
Graduate School of Science,
Osaka University,
Machikaneyama 1-1,
Toyonaka,
Osaka,
560-0043,
Japan.

{\em e-mail address}\ : \  okawa@math.sci.osaka-u.ac.jp
\ \vspace{0mm} \\

\ \vspace{0mm} \\

\noindent
Kazushi Ueda

Department of Mathematics,
Graduate School of Science,
Osaka University,
Machikaneyama 1-1,
Toyonaka,
Osaka,
560-0043,
Japan.

{\em e-mail address}\ : \  kazushi@math.sci.osaka-u.ac.jp
\ \vspace{0mm} \\

\end{document}